\documentclass[10pt]{article}
\usepackage{graphicx}
\usepackage[a4paper, textwidth=16cm]{geometry}

\usepackage{amsthm,amsmath,mathtools}
\usepackage{xcolor,paralist,hyperref,titlesec,fancyhdr,etoolbox}
\newtheorem{theorem}{Theorem}[]
\newtheorem{definition}[theorem]{Definition}
\newtheorem{lemma}[theorem]{Lemma}

\usepackage{amssymb}
\newtheorem{assumption}{Assumption}

\titleformat{\section}
  {\normalfont\Large\bfseries}
  {\thesection}
  {1em}
  {}
  
\titlespacing*{\section}{0pt}{0ex}{0ex}

\hypersetup{ colorlinks=true, linkcolor=black, filecolor=black, urlcolor=black, citecolor=black }

\usepackage{lipsum}

\begin{document}
\title{\textbf{Stable boundary determination of a complex anisotropic admittivity and its derivatives from a local Neumann-to-Dirichlet map}} 
\author{Jessica Crosse\thanks{Department of Mathematics and Statistics, University of Limerick, V94 T9PX, Limerick, Ireland. E-mail: jessica.crosse@ul.ie}
\and Romina Gaburro\thanks{Department of Mathematics and Statistics, University of Limerick, Health Research Institute (HRI), V94 T9PX, Limerick, Ireland. E-mail: romina.gaburro@ul.ie}}
\date{}
\maketitle

\begin{abstract}
We study the classical anisotropic Calderón problem associated to the elliptic equation $\text{div}(\sigma\nabla u)=0$, where the complex admittivity $\sigma$ is of the form $\sigma=A(\cdot,a(\cdot))$ in a domain $\Omega\subset\mathbb{R}^n$, $n\ge3$. We establish boundary stability estimates for $\sigma$ and its derivatives of arbitrary order from a local Neumann-to-Dirichlet map. Our results extend those of Comm. Partial Differential Equations, 34 (2009) from the real-valued conductivity setting to complex anisotropic admittivities. 
\end{abstract} 

\textbf{Key words:} Complex Calder\'on's problem, anisotropic admittivity, local Neumann-to-Dirichlet map.\\

\section{Introduction}
\label{sec:intro}
In this paper, we address the inverse problem of stably determining the complex anisotropic admittivity $\sigma$ in a domain $\Omega\subset\mathbb{R}^n$, with $n\geq3$, from the Neumann-to-Dirichlet (N-D) map localised on a portion $\Sigma$ of the boundary of $\Omega$, $\partial\Omega$. In absence of internal sources, the electrostatic potential $u$ in $\Omega$ satisfies
\begin{equation}
    \label{eqn: 1}
    Lu=\textrm{div}(\sigma\nabla u)=0 ,\quad\textrm{in}\quad\Omega,
\end{equation}
where the (possibly anisotropic) electric admittivity at frequency $k$ is given by the complex-symmetric matrix valued function $\sigma(x)=\sigma_R(x) + ik\sigma_I(x)$, $x\in\Omega$,
where $\sigma_R$ and $\sigma_I$ are the \textit{conductivity} and \textit {permittivity} of $\Omega$, respectively. 

When $\sigma$ is real and of type $\sigma = A(\cdot,a(\cdot))$, where $t\mapsto A(\cdot, t)$ is \textit{a-priori} known and $a$ is an unknown scalar parameter to be determined, Lipschitz and H\"older stability estimates for $\sigma$ and its derivatives of any order, respectively, were established in \cite{A-G2} via the method of singular solutions to \eqref{eqn: 1} introduced in \cite{A}. Such solutions had an isolated singularity of order $2-n-m$, for any positive integer $m$ at a point outside $\Omega$. The complex case has been less studied to date. When $\sigma$ is complex and again of type $\sigma = A(\cdot,a(\cdot))$ and the boundary map is the local Dirichlet-to-Neumann (D-N) map, similar results were recently obtained in \cite{mypreprint}. In the complex setting, we also recall the global stability result \cite{FosGabSin25}, which treats the case where $\sigma=a A$, where $A$ is a Lipschitz real matrix-valued function which is \textit{a-priori} known and $a$ is a complex affine scalar function to be stably determined in terms of a local D-N map. 

Here we extend the results of \cite{A-G2} to the complex setting, considering complex-matrix valued-functions of the form $\sigma=A(\cdot,a(\cdot))$, where $t\mapsto A(\cdot, t)$ is \textit{a-priori} known and $a$ is an unknown scalar parameter. The precise assumptions are given in Section \ref{sec2}. We also further extend the boundary stability results in \cite{mypreprint}, established in terms of the global D-N map, to the physically relevant setting in which only the local N-D map is available. This choice is motivated by applications to Electrical Impedance Tomography (EIT) where only partial boundary measurements are often accessible via a local N-D map, particularly in medical imaging and geophysical prospecting. We also emphasise that allowing $\sigma$ in \eqref{eqn: 1} to be a complex-valued matrix yields a more realistic model of a material's electrical behaviour by simultaneously accounting for both its conductivity $\sigma_R$ and permittivity $\sigma_I$. We refer to \cite{FosGabSin25} for a discussion on this.

Our stability estimates of $\sigma$ and its derivatives, obtained when the frequency $k$ belongs to an interval explicitly depending on the \textit{a-priori} information, rely on the construction of singular solutions to \eqref{eqn: 1} having an isolated singularity outside $\overline{\Omega}$ of order $2-n-m$, for any integer $m\geq0$, satisfying a zero Neumann conditions on $\partial\Omega\backslash\overline{\Sigma}$. For the case $m=0$, we construct a modified Neumann function of $L$ under the mild assumption that $A$ is H\"older continuous (Theorem \ref{Neumann th}). 

The paper is organised as follows. In Section \ref{sec2}, we rigorously formulate the problem, rigorously define the local N-D map and state our main stability result (Theorem \ref{stab results}). Section \ref{sec: sing solns} is devoted to the construction of singular solutions (Theorems \ref{Neumann th}, \ref{sing thm}). The proof of Theorem \ref{stab results} is presented in Section \ref{sec4}. 


\section{Formulation of the problem and main result}\label{sec2}


For $n\geq3$, a point $x\in\mathbb{R}^n$ will be denoted by $x=(x',x_n)$, where $x'\in\mathbb{R}^{n-1}$ and $x_n\in\mathbb{R}$. Given a point $x\in\mathbb{R}^n$, we will denote with $B_r(x)$, $B'_r(x')$ the open balls in $\mathbb{R}^n$, $\mathbb{R}^{n-1}$, centred at $x$ and $x'$, respectively, with radius $r$.
For any $v,w\in\mathbb{C}^n$, with $v=(v_1,...,v_n),\;w=(w_1,...,w_n)$, we understand that $v\cdot w=\sum_{i=1}^n v_iw_i$. We consider a bounded domain $\Omega\subset\mathbb{R}^n$, with $n\geq 3$, having a $C^1$ boundary $\partial\Omega$ with constant $r_0$, i.e., for any $P\in\partial\Omega$, there exists a rigid transformation of coordinates under which we have $P=0$ and $\Omega\cap B_{r_0}(0)=\{(x',x_n)\in B_{r_0}(0)\;|\;x_n>\varphi(x')\}$, where $\varphi$ is a $C^1$ function on $B'_{r_0}(0)$. Given a constant $\lambda>0$, we consider, for a fixed frequency $k>0$, the one-parameter family of complex matrix-valued functions $A(x,t)=A_R(x,t)+ik A_I(x,t)$, for any $x\in\Omega$, $t\in[\lambda^{-1},\lambda]$. Here $A_R$, $A_I\in L^{\infty}(\Omega\times[\lambda^{-1},\lambda], Sym_n)$ where $Sym_n$ denotes the class of $n\times n$ real-valued symmetric matrices and we assume that $A_R$ and $A_I$ commute. Throughout the entire manuscript we fix a real number $p>n$, denote by $\nu$ the unit outer normal to $\partial\Omega$, and by $\Re(z)$, $\Im(z)$ the real and imaginary part of $z\in\mathbb{C}$, respectively.
\begin{definition}\label{A in H}
    Given positive constants $\mathcal{E}_1,\mathcal{E}_2,E,\mathcal{D}$ and $\lambda$ introduced above, we say that $A(\cdot,\cdot)\in\mathcal{H}$ if the following conditions hold:
    \begin{eqnarray}
         &&A_R,A_I,D_tA_R,D_tA_I\in W^{1,p}(\Omega\times[\lambda^{-1},\lambda],Sym_n),\\
        &&supess_{t\in[\lambda^{-1},\lambda]} \big(\Vert A(\cdot,t)\Vert_{L^p(\Omega)}+\Vert D_xA(\cdot,t)\Vert_{L^p(\Omega)}+\Vert D_tA(\cdot,t)\Vert_{L^p(\Omega)} +\Vert D_tD_xA(\cdot,t)\Vert_{L^p(\Omega)}\big)\leq E,\\
        &&\Re(D_tA(x,t)\xi\cdot\xi)\geq \mathcal{D}^{-1}|\xi|^2,\quad\textrm{for a.e.} \quad x\in\Omega,\quad\textrm{for every}\quad t\in[\lambda^{-1},\lambda],\quad\xi\in\mathbb{C}^n,\label{eqn: mono}\\
        &&\mathcal{E}_1^{-1}|\xi|^2\leq A_R(x,t)\xi\cdot\xi\leq\mathcal{E}_1|\xi|^2,\quad\textrm{for a.e.} \quad x\in\Omega,\quad\textrm{for every}\quad t\in[\lambda^{-1},\lambda],\quad\xi\in\mathbb{R}^n,\label{eqn: AR assump}\\
        &&\begin{aligned}
\mathcal{E}_2^{-1}|\xi|^2
    &\le A_I(x,t)\xi\cdot\xi
    \le \mathcal{E}_2|\xi|^2,\quad\textrm{for a.e.} \quad x\in\Omega,\quad\textrm{for every}\quad t\in[\lambda^{-1},\lambda],\quad\xi\in\mathbb{R}^n,\\
&\text{or}\\
-\mathcal{E}_2|\xi|^2
    &\le A_I(x,t)\xi\cdot\xi
    \le -\mathcal{E}_2^{-1}|\xi|^2,\quad\textrm{for a.e.} \quad x\in\Omega,\quad\textrm{for every}\quad t\in[\lambda^{-1},\lambda],\quad\xi\in\mathbb{R}^n.
\end{aligned}\label{eqn: AI assump}
    \end{eqnarray}
\end{definition}
\begin{assumption}\label{assump on ai} 
Given a positive constant $\mathcal{F}$ and $\lambda$ introduced above, we assume that $a\in L^\infty(\Omega)$ satisfies
    \begin{eqnarray}
      \lambda^{-1}\leq a(x)\leq\lambda,\quad\textrm{for a.e.}\quad x\in\Omega,\quad\textrm{and}\quad\Vert a\Vert_{W^{1,p}(\Omega)}\leq \mathcal{F}.
    \end{eqnarray}
\end{assumption}

By denoting $u=u_1+iu_2$, the complex equation \eqref{eqn: 1} can be written as $\textrm{div}(\mathbf{C}(\cdot,a(\cdot))\nabla \mathbf{u}(\cdot))=0$, in $\Omega$, where $\mathbf{u}=(u_1,u_2)^T$, and $\mathbf{C}\big(\cdot,a(\cdot)\big)=\begin{pmatrix}
A_R\big(\cdot,a(\cdot)\big) & -k A_I\big(\cdot,a(\cdot)\big)\\
k A_I\big(\cdot,a(\cdot)\big) & A_R\big(\cdot,a(\cdot)\big)
\end{pmatrix}$. By \eqref{eqn: AR assump}, there is a constant $C_1>0$ depending on $\mathcal{E}_1$ such that $\mathbf{C}$ satisfies the \textit{strong ellipticity condition} 
\begin{equation}\label{eqn: C elliptic}
    C_1^{-1}|\xi|^2\leq \mathbf{C}(x,a(x))\xi\cdot\xi\leq C_1|\xi|^2,\quad\textrm{for a.e}\quad x\in\Omega,\quad\textrm{for every}\quad \xi\in\mathbb{R}^{2n}.
\end{equation}


We introduce an open portion $\Sigma$ of $\partial\Omega$, set $\Gamma=\partial\Omega\backslash\overline{\Sigma}$ and consider the function spaces
\begin{eqnarray*}
    &&\prescript{}{0}{H}^{\frac{1}{2}}(\partial\Omega)=\big\{f\in H^{\frac{1}{2}}(\partial\Omega)\;\big|\;\int_{\partial\Omega}f=0\big\},\qquad \prescript{}{0}{H}^{-\frac{1}{2}}(\partial\Omega)=\big\{f\in H^{-\frac{1}{2}}(\partial\Omega)\;\big|\;\langle f,1\rangle=0\big\},\\
    &&\prescript{}{0}{H}^{-\frac{1}{2}}(\Sigma)=\big\{\psi\in\prescript{}{0}{H}^{-\frac{1}{2}}(\partial\Omega)\;\big|\;\langle\psi,f\rangle=0\;\textrm{for any}\; f\in H_{00}^{\frac{1}{2}}(\Gamma)\big\},
\end{eqnarray*}
where $H_{00}^{\frac{1}{2}}(\Gamma)$ denotes the closure in the $H^{\frac{1}{2}}(\partial\Omega)$ norm of $\big\{f\in H^\frac{1}{2}(\partial\Omega)\;|\; \textrm{supp} f\subset\Gamma\big\}$.\\
We consider $A\in\mathcal{H}$, $a$ satisfying Assumption \ref{assump on ai} and rigorously define the Neumann-to-Dirichlet map associated to $A(\cdot,a(\cdot))$, together with its local version, localised on $\Sigma$.
\begin{definition}
    The Neumann-to-Dirichlet (N-D) map associated with $A(\cdot,a(\cdot))$, $\mathcal{N}_{A(\cdot, a(\cdot))}:\prescript{}{0}{H}^{-\frac{1}{2}}(\partial\Omega)\rightarrow \prescript{}{0}{H}^{\frac{1}{2}}(\partial\Omega)$
    is given by the selfadjoint operator satisfying $\langle\psi,\overline{\mathcal{N}_{A(\cdot, a(\cdot))}\psi}\rangle=\int_\Omega A(x,a(x))\nabla u(x)\cdot\nabla u(x)\;dx$,
    for every $\psi\in\prescript{}{0}{H}^{-\frac{1}{2}}(\partial\Omega)$, where $u\in H^1(\Omega)$ is the weak solution to the Neumann problem
    \begin{equation*}
    \begin{cases}
        \textrm{div}(A(\cdot,a(\cdot))\nabla u(\cdot))=0,&\quad\textrm{in}\quad\Omega,\\
        A(\cdot,a(\cdot))\nabla u(\cdot)\cdot\nu|_{\partial\Omega}=\psi,& \quad\textrm{on}\quad \partial\Omega,\\
        \int_{\partial\Omega}u=0.
    \end{cases}
\end{equation*}
The local N-D map associated to $A(\cdot,a(\cdot))$ and $\Sigma$ is the operator $\mathcal{N}_{A(\cdot,a(\cdot))}^\Sigma:\prescript{}{0}{H}^{-\frac{1}{2}}(\Sigma)\rightarrow\left(\:\prescript{}{0}{H}^{-\frac{1}{2}}(\Sigma)\right)^\ast\subset\prescript{}{0}{H}^{\frac{1}{2}}(\partial\Omega)$ given by $\langle\mathcal{N}_{A(\cdot,a(\cdot))}^\Sigma\varphi,\overline{\psi}\rangle=\langle\mathcal{N}_{A(\cdot,a(\cdot))}\varphi,\overline{\psi}\rangle$, for every $\varphi,\psi\in\prescript{}{0}{H}^{-\frac{1}{2}}(\Sigma)$.
\end{definition}
To emphasise the dependence of $\mathcal{N}_{A(\cdot,a(\cdot))}^\Sigma$ on $a$ and $\Sigma$, we will simply denote it by $\mathcal{N}_a^\Sigma$. 
The following identity can be recovered from the well-known Alessandrini's identity \cite{A} (see also \cite{AleDeHGabSin18} for the real case and \cite{FosGabSin25} for the complex one),
\begin{equation}\label{eqn: Aless for ND}
    \big\langle A(x,a_1(x))\nabla u_1\cdot\nu,\overline{(\mathcal{N}^\Sigma_{a_2}-\mathcal{N}^\Sigma_{a_1})A(x,a_2(x))\nabla u_2\cdot\nu}\big\rangle=\int_\Omega\big(A(x,a_1(x))-A(x,a_2(x))\big)\nabla u_1(x)\cdot\nabla u_2(x)\;dx,
\end{equation}
for any $u_i\in H^1(\Omega)$ being the unique weak solution to $\textrm{div}(A(\cdot,a_i(\cdot))\nabla u_i)=0$, in $\Omega$, for $i=1,2$. We will denote 
\begin{flalign}\label{eqn: DN norm def}
        \Vert&\mathcal{N}_{a}^\Sigma\Vert_{\ast}=\sup\big\{|\langle f,\overline{\mathcal{N}_{a}^\Sigma g}\rangle|\;|\; f,g\in\prescript{}{0}{H}^{-\frac{1}{2}}(\Sigma),\; \Vert f\Vert_{\prescript{}{0}{H}^{-1/2}(\Sigma)}=\Vert g\Vert_{\prescript{}{0}{H}^{-1/2}(\Sigma)}=1\big\}
    \end{flalign}
to be the norm on the Banach space of bounded linear operators between $\prescript{}{0}{H}^{-\frac{1}{2}}(\Sigma)$ and $(\prescript{}{0}{H}^{-\frac{1}{2}}(\Sigma))^\ast$. 
Let $\partial\Sigma$ denote the boundary of $\Sigma$. For every $\eta$, $0<\eta<r_0$, we denote
    \begin{eqnarray}
        \Sigma_\eta =\big\{x\in\Sigma\;|\,\textrm{dist}(x, \partial\Sigma)>\eta\big\},\quad U_\eta =\Big\{x\in\mathbb{R}^n\;|\;\textrm{dist}(x,\Sigma_\eta)<\frac{\eta}{4}\Big\},\quad\textrm{and}\quad U_\eta^i =U_\eta\cap\Omega.
    \end{eqnarray}
Given $\Sigma$ is open and non-empty, there exists $\eta_0$ such that for $0<\eta_0<r_0$, $\Sigma_{\eta_0}$ is always non-empty. From now on we shall only consider values of $\eta$ below $\eta_0$. We will refer to the set of positive numbers $n,p,k,r_0, \lambda,\mathcal{E}_1, \mathcal{E}_2, E,\mathcal{D},$ $\mathcal{F}, \eta,\eta_0$ introduced above, and the diameter of $\Omega$, $diam(\Omega)$ as the \textit{a-priori} data.


\begin{theorem}[Local stability of boundary values and their derivatives]\label{stab results}
   Assume that $k$ satisfies
    \begin{equation}
        \label{eqn: k1}
        0\!<k\!\leq\!\!\underset{\mathcal{A}+\mathcal{B}+\mathcal{C}=1}{\max}\!\bigg\{\!\min \!\bigg\{\frac{(m^3-m^{-3})\tan(\frac{\mathcal{A}\pi}{4})}{M^3-M^{-3}}\!, \; M^{-6}\tan\bigg(\frac{\mathcal{B}\pi}{2n}\bigg)\!,\; M^{-6}\tan\bigg(\frac{\mathcal{C}\pi}{2n}\bigg)\bigg\}\!\bigg\},
    \end{equation}
   where $M=\max\{\mathcal{E}_1,\mathcal{E}_2\}$, $m=\min\{\mathcal{E}_1,\mathcal{E}_2\}$. If $A\in\mathcal{H}$ and $a_i$ is a real-valued function satisfying Assumption \ref{assump on ai}, for $i=1,2$, then we have
\begin{equation}
    \label{eqn: Stab result}
    \Vert A\big(x,a_1(x)\big)-A\big(x,a_2(x)\big)\Vert_{L^\infty(\overline{\Sigma}_\eta)}\leq C\Vert\mathcal{N}_{a_1}^\Sigma-\mathcal{N}_{a_2}^\Sigma\Vert_\ast,
\end{equation}
where $C$ is a positive constant which depends on the \textit{a-priori} data only. Furthermore, if there exists $E_h>0$ such that
   \begin{equation}
       A\in C^{h,\alpha}(\overline{U}_\eta\times[\lambda^{-1},\lambda]),\quad \Vert A\Vert_{C^{h,\alpha}(\overline{U}_\eta\times[\lambda^{-1},\lambda])}\leq E_h,\quad\textrm{and}\quad \Vert a_1-a_2\Vert_{C^{h,\alpha}(\overline{U}_\eta)}\leq E_h,
   \end{equation}
    for some $\alpha$, $0<\alpha<1$ and $\eta\leq\eta_0$, then we have
    \begin{equation}\label{eqn: der stab result}
        \Vert D^h\big(A(x,a_1(x))-A(x,a_2(x))\big)\Vert_{L^\infty(\overline{\Sigma}_\eta)}\leq C\Vert\mathcal{N}_{a_1}^\Sigma-\mathcal{N}_{a_2}^\Sigma\Vert_\ast^{\alpha\delta_h},
    \end{equation}
    where $h\geq1$ is an integer, $\delta_h=\underset{i=0}{\overset{h}{\prod}}\frac{\alpha}{\alpha+i}$, and $C$ is a positive constant which depends on the \textit{a-priori} data and $h$ only.
\end{theorem}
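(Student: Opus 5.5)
We follow the singular-solution method of \cite{A,A-G2}, adapted to the complex setting as in \cite{mypreprint}, now with the local N-D map. The starting point is the Alessandrini-type identity \eqref{eqn: Aless for ND}. We plug into it singular solutions $u_1,u_2$ of $\textrm{div}(A(\cdot,a_i(\cdot))\nabla u_i)=0$ with an isolated singularity at a point $z_\tau=x_0+\tau\nu(x_0)$ outside $\overline\Omega$, where $x_0\in\overline{\Sigma}_\eta$ and $\tau>0$ is small. These solutions also satisfy zero conormal derivative on $\Gamma=\partial\Omega\setminus\overline\Sigma$. For $m=0$ they are the modified Neumann functions of Theorem~\ref{Neumann th}; for general $m$ they come from Theorem~\ref{sing thm}. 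Because the conormal data vanish on $\Gamma$, the data $A(x,a_i)\nabla u_i\cdot\nu$ lie in $\prescript{}{0}{H}^{-\frac12}(\Sigma)$. Their norms are bounded by $C\tau^{-\beta}$ for an explicit power $\beta$, since the singularity sits at distance $\sim\tau$ from $\Sigma$. Hence the left-hand side is at most $C\tau^{-2\beta}\,\varepsilon$, where $\varepsilon=\Vert\mathcal{N}_{a_1}^\Sigma-\mathcal{N}_{a_2}^\Sigma\Vert_\ast$.

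\textbf{Step 1: linearising in $a$.} We write $A(x,a_1)-A(x,a_2)=(a_1-a_2)\int_0^1D_tA(x,a_2+s(a_1-a_2))\,ds$. The monotonicity condition \eqref{eqn: mono} gives $\Re\big(D_tA\,\xi\cdot\xi\big)\ge\mathcal D^{-1}|\xi|^2$ for complex $\xi$. Set $\delta=\Vert A(\cdot,a_1)-A(\cdot,a_2)\Vert_{L^\infty(\overline\Sigma_\eta)}$, which is comparable to $\Vert a_1-a_2\Vert_{L^\infty(\overline\Sigma_\eta)}$. Pick $x_0$ where $|a_1-a_2|$ is (almost) maximal, and assume without loss of generality that $a_1-a_2>0$ near $x_0$.

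\textbf{Step 2: lower bound from the leading singularity.} Near $z_\tau$, each $u_i$ behaves like the fundamental solution $\Gamma_i$ of the frozen operator $\textrm{div}(A(x_0,a_i(x_0))\nabla\cdot)$. These are explicit functions built from a complex symmetric matrix, $\Gamma\sim c\,(A^{-1}(x-z)\cdot(x-z))^{(2-n)/2}$ with branch-cut issues. The integrand $D_tA\nabla\Gamma_1\cdot\nabla\Gamma_2$ is complex, so its real part must be shown to stay $\gtrsim|x-z_\tau|^{2-2n}$.

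This is exactly where the restriction \eqref{eqn: k1} on $k$ enters. $A_R$ and $A_I$ commute, so they are simultaneously diagonalisable. The eigenvalues of $A(x_0,t)$ then have arguments bounded by $\arctan(kM^2)$-type quantities, and those of $A^{-1}$ likewise. The powers $(\cdot)^{(2-n)/2}$ and $(\cdot)^{-n/2}$ appearing in $\nabla\Gamma_i$ rotate these arguments by a factor of $n/2$. The three terms in \eqref{eqn: k1} correspond to three phase budgets, $\mathcal A\pi/4$, $\mathcal B\pi/2n$ and $\mathcal C\pi/2n$: one for the product $D_tA\,\xi\cdot\xi$ with $\xi=A_1^{-1}(x-z)$, and one for each of the two fundamental solutions. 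Keeping the total phase below $\pi/2$ makes the real part positive and comparable to the modulus. Integrating over $B_r(x_0)\cap\Omega$ then gives
\[
\Re\int_{B_r(x_0)\cap\Omega}\big(A(x,a_1)-A(x,a_2)\big)\nabla u_1\cdot\nabla u_2\;dx\ \ge\ c\,\delta\,\tau^{2-n}.
\]
From this we subtract the error terms. These are: the remainder $u_i-\Gamma_i$, which is of lower order $|x-z|^{2-n+\alpha'}$ by the construction in Theorems~\ref{Neumann th}, \ref{sing thm} and the $W^{1,p}$ regularity; the variation of $a_1-a_2$ near $x_0$, controlled by its modulus of continuity from the $W^{1,p}$ bound with $p>n$; and the integral over $\Omega\setminus B_r$, which is bounded.

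\textbf{Step 3: conclusion and derivatives.} Step 2 yields $\delta\tau^{2-n}\le C(\varepsilon\tau^{-2\beta}+\tau^{2-n+\alpha'}+\dots)$. The key point is to take $\beta$ matching $\tau^{2-n}$, via Caccioppoli and trace estimates on the singular solutions. Then, as in \cite{A-G2}, the Neumann data can be estimated so that both sides scale alike, and choosing $\tau$ optimally gives the Lipschitz bound \eqref{eqn: Stab result}. This balancing step is the delicate point for the local map. For \eqref{eqn: der stab result} we argue by induction on $h$. Assume the derivatives of order below $h$ are controlled, and suppose the $h$-th normal derivative of $a_1-a_2$ is maximal at $x_0$. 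Then $a_1-a_2\approx$ a homogeneous polynomial of degree $h$ in $(x-x_0)$, and we use singular solutions of order $2-n-h$ (Theorem~\ref{sing thm} with $m=h$) so that the integral picks up this term. Tangential derivatives vanish on the boundary at the previous level and are handled by interpolation. Each step loses a factor $\alpha/(\alpha+h)$ through interpolation between the $C^{h,\alpha}$ bound and the lower-order estimate, which produces $\delta_h$.

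The main obstacle I expect is the sign/phase control in Step 2 for complex anisotropic coefficients. This is precisely where the explicit bound on $k$ must be verified.
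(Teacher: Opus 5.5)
\textbf{Overall.} Your argument for \eqref{eqn: Stab result} follows the paper. You feed Alessandrini's identity \eqref{eqn: Aless for ND} with the modified Neumann functions of Theorem~\ref{Neumann th}. The Caccioppoli bound $\Vert N^{loc}\Vert_{H^1(\Omega)}\le C\tau^{\frac{2-n}{2}}+B$ makes the boundary term of order $\varepsilon\tau^{2-n}$, exactly the size of the main term. Condition \eqref{eqn: k1} keeps the phase of the leading integrand within $\pi/4$, and \eqref{eqn: mono} gives its size. Dividing by $\tau^{2-n}$ and letting $\tau\to0$ (rather than optimising) gives the Lipschitz bound. Your choice $m=h$ in the derivative step is a legitimate alternative to the paper's ``$m$ sufficiently large''. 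All that is needed is that the normalised contribution $\tau^{n+2m-2-h}$ of $\Omega\setminus B_\rho(z_\tau)$ be $O(\tau^\alpha)$, and $m=h$ already gives exponent $n+h-2\ge1$.

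\textbf{The gap.} It is in the derivative step, at ``so that the integral picks up this term''. For $m\ge1$ the solutions of Theorem~\ref{sing thm} contain $C_m^{\frac{n-2}{2}}$ evaluated at a complex argument, and the power $\frac{2-n-m}{2}$ of a complex quadratic form. So $\big(A(x,a_1)-A(x,a_2)\big)\nabla u_1\cdot\nabla u_2$ has no sign a priori, and the gradient of a higher-order singular solution could in principle degenerate. Your phase budget is calibrated to $m=0$: the terms $\tan(\frac{\mathcal B\pi}{2n})$, $\tan(\frac{\mathcal C\pi}{2n})$ in \eqref{eqn: k1} match the exponent $\frac n2$ of the fundamental solution. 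It does not transfer as it stands. What you need is the lower bound \eqref{Alessandrini derivatives}: the real part of the leading integrand must be at least $C(a_1-a_2)|x-z_\tau|^{2-2n-2m}$. The paper obtains this, following \cite{mypreprint}, from the non-degeneracy $|Du^{loc}_m(x)|\ge C|x-z_\tau|^{1-n-m}$ of Lemma~\ref{Grad est ND lemma} together with \eqref{eqn: mono}. Your proposal invokes nothing of this kind.

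\textbf{Where $\frac{\alpha}{\alpha+h}$ comes from.} The loss is not from interpolation. The estimates give
$\Vert\partial^h_{\widetilde\nu}(a_1-a_2)\Vert_{L^\infty(\overline\Sigma_\eta)}\le C\{\varepsilon^{\delta_{h-1}}\tau^{-h}+\tau^\alpha+\tau^{n+2m-2-h}\}$.
Here $\tau^{-h}$ is the ratio of the lower-order Taylor terms (controlled by induction) to the main term, and $\tau^\alpha$ is the $C^{h,\alpha}$ Taylor remainder. Choosing $\tau=\varepsilon^{\delta_{h-1}/(\alpha+h)}$ yields $\varepsilon^{\delta_h}$. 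Interpolation enters only for tangential derivatives and in the final iteration of the interpolation inequality of \cite{mypreprint}. That final step is where $\delta_h$ degrades to the exponent $\alpha\delta_h$ of \eqref{eqn: der stab result}.
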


\section{Singular solutions having zero Neumann condition on $\partial\Omega\backslash\overline\Sigma$}\label{sec: sing solns}
We digress to construct singular solutions to \eqref{eqn: 1} having an isolated singularity at $z\in\mathbb{R}^n\setminus\overline\Omega$, of order $2-n-m$, for any integer $m\geq 0$. To do that, we consider a symmetric matrix-valued function $A=A(x)$, $x\in\Omega$ satisfying \eqref{eqn: C elliptic}. When $A\in W^{1,p} (\Omega)$, with $p>n$, such solutions were shown in \cite{Cu-G-N} to take the form for any $x\in\Omega$
 \begin{equation}\label{u global}
         u(x)=\big(A^{-1}(z_\tau)(x-z)\!\cdot\!(x-z)\big)^{\frac{2-n-m}{2}}m!\big(A^{-1}_{nn}(z)\big)^{\frac{m}{2}}
         \times C_m^{\frac{n-2}{2}}\Bigg(\frac{A^{-1}_{n}(z)(x-z)}{\big(A^{-1}_{nn}(z)\big)^{\frac{1}{2}}\big(A^{-1}(z)(x-z)\!\cdot\!(x-z)\big)^\frac{1}{2}}\Bigg)+v_0(x),
     \end{equation}
     where $C_m^{\frac{n-2}{2}}\!:\!\mathbb{C}\!\rightarrow\!\mathbb{C}$ is the complex Gegenbauer polynomial of degree $m$ and order $\frac{n-2}{2}$ and $A^{-1}_{n}(z)$, $A^{-1}_{nn}(z)$ denote the last row, last entry in the last row of the matrix $A^{-1}(z)$, respectively. Here we construct solutions to \eqref{eqn: 1} of type \eqref{u global}, having an isolated singularity in a point $z$ outside $\Omega$, and satisfying a zero Neumann condition outside $\Sigma$. We distinguish the cases when $m=0$ and $m>0$. 
\\For the case $m=0$, we construct a modified Neumann function having an isolated singularity at some point $z$ outside $\Omega$, of order $2-n$ under the mild hypotheses that $A\in C^\beta(\Omega)$, $0<\beta\leq1$ and $A$ satisfies \eqref{eqn: AR assump}, \eqref{eqn: AI assump}. We start by introducing the \textit{Neumann function} for $L$ in \eqref{eqn: 1}. From \eqref{eqn: C elliptic} we have that there exists a $2\times2$ matrix-valued function, $\mathbf{N}(x,z)$, with measurable entries $N_{ij}:\Omega\times\Omega\rightarrow\overline{\mathbb{R}}$, $i,j=1,2$, known as the Neumann matrix of $L$ in \eqref{eqn: 1} satisfying
\begin{equation}
    \begin{cases}
        \textrm{div}(\mathbf{C}(\cdot,a(\cdot))\nabla \mathbf{N}(\cdot,z))=-\delta(\cdot-z)\mathbf{I},&\textrm{in}\quad\Omega,\\
        \mathbf{C}(\cdot,a(\cdot))\nabla\mathbf{N}(\cdot,z)\cdot\boldsymbol{\nu}=-\frac{1}{|\partial\Omega|}\mathbf{I},&\textrm{on}\quad\partial\Omega,
    \end{cases}
\end{equation}
where $\boldsymbol{\nu}=(\nu,\nu)^T$ and $\mathbf{I}$ is the $2\times2$ identity matrix. Since $A\in C^\beta(\Omega)$, we also have $|\mathbf{N}(x,z)|\leq C|x-z|^{2-n}$, for all $x,z\in\Omega$, $x\neq z$ (see \cite{C-K}). We define the \textit{Neumann function} of $L$ in $\Omega$ to be $N(x,z)=N_{11}(x,z)+iN_{21}(x,z)$, where $N_{i1}$ denotes the $i$-th element in the first column of $\mathbf{N}$, for $i=1,2$. Normalising as in \cite{C-K}, $\int_{\partial\Omega}N_a(\cdot,z)\;dS(\cdot)=0$, by Green's identities, we have $N(x,z)=N(z,x)$, for all $x,z\in\Omega$, $x\neq z$.\\
For any $\eta$, $0<\eta\leq\eta_0$, we construct an enlarged domain $\Omega_\eta$, having $C^1$ boundary with constants depending only on $\eta, r_0$ such that $\Omega\subset\Omega_\eta, \quad \partial\Omega\cap\Omega_\eta\subset\subset\Sigma,\qquad\textrm{and}\qquad\textrm{dist}(x,\partial\Omega_\eta)\geq\frac{\eta}{2},\quad\textrm{for every}\quad x\in U_\eta$.
\begin{theorem}[Modified Neumann function with zero Neumann condition on $\partial\Omega\backslash\overline{\Sigma}$]\label{Neumann th}
    For any $\eta$, $0<\eta\leq\eta_0$, set $x^0\in\overline{\Sigma}_\eta$ and $z_\tau=x^0+\tau\nu$, for any $\tau$, $0<\tau\leq\tau_0$, where $\tau_0$ is a positive constant depending on $r_0$ only, such that $z_{\tau}\in\Omega_{\eta}\setminus\overline\Omega$. Let $L$ be the operator in \eqref{eqn: 1} with $A\in C^{\beta}(\Omega)$ complex-symmetric and satisfying \eqref{eqn: AR assump}, \eqref{eqn: AI assump} and $||A||_{C^{\beta}(\Omega)}\leq E$, for $0<\beta\leq 1$ and a positive constant $E$. If $S$ is an open portion of $\partial\Omega_\eta\backslash\partial\Omega$ having positive distance from $\partial\Omega$, there exists $N^{loc}\in H^1_{loc}(\overline{\Omega}_\eta\backslash \{z_\tau\})$ solution to 
    \begin{equation}\label{eqn: local NK BVP}
        \begin{cases}
            LN^{loc}(\cdot,z_\tau)=-\delta(\cdot-z_\tau),\quad&\textrm{in}\quad\Omega_\eta,\\
            A(x)\nabla N^{loc}_a(\cdot,z_\tau)\cdot\nu=0,&\textrm{on}\quad\partial\Omega_\eta\backslash S,\\
            A(x)\nabla N^{loc}(\cdot,z_\tau)\cdot\nu=-\frac{1}{|S|},&\textrm{on}\quad S,
        \end{cases}
    \end{equation}
    having the form 
    \begin{equation}\label{N asymptotics}
    N^{loc}(x,z_\tau)=C_n\big(A^{-1}(z_\tau)(x-z_\tau)\cdot(x-z_\tau)\big)^{\frac{2-n}{2}}+R(x,z_\tau),
    \end{equation}
    where $C_n$ is a suitable dimensional constant and $R(x,z_\tau)$ satisfies
    \begin{equation}\label{eqn: R property}
        |R(x,z_\tau)|+|x-z_\tau||\nabla R(x,z_\tau)|\leq C|x-z_\tau|^{2-n+\alpha},\quad\textrm{for every}\quad x\in\Omega_\eta, \quad|x-z_\tau|\leq\rho,
    \end{equation}
    where $C$ is a positive constant depending on $\mathcal{E}_1,\mathcal{E}_2,k, E$ and $\Omega_\eta$ only. Here $\rho$ is a positive number depending only on the geometry of $\Omega_\eta$, and $0<\alpha<\beta$. Moreover $\Vert N^{loc}(\cdot,z_\tau)\Vert_{H^1(\Omega)}\leq C\tau^{\frac{2-n}{2}}+B$, for any $\tau$, $0<\tau\leq\tau_0$, where $C,B$ are a positive constants depending on $\mathcal{E}_1,\mathcal{E}_2,k, E$, and $diam(\Omega)$ only.
\end{theorem}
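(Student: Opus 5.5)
The construction follows the real-valued argument of \cite{A-G2}, carried out for the equivalent strongly elliptic $2\times2$ system $\textrm{div}(\mathbf{C}\nabla\mathbf{u})=0$. First extend $A$ from $\Omega$ to $\Omega_\eta$ as a complex-symmetric $C^\beta$ matrix that still satisfies \eqref{eqn: AR assump} and \eqref{eqn: AI assump}, with constants controlled by the original ones, say through a reflection across $\partial\Omega$ combined with a cutoff. The extended $\mathbf{C}$ is then strongly elliptic on $\Omega_\eta$ in the sense of \eqref{eqn: C elliptic}. Next take the Neumann matrix $\mathbf{N}_\eta$ of $\textrm{div}(\mathbf{C}\nabla\cdot)$ on $\Omega_\eta$ from \cite{C-K}, but replace the uniform flux $-\frac{1}{|\partial\Omega_\eta|}$ on the whole boundary by the flux $-\frac{1}{|S|}\chi_S$. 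This replacement is admissible because the Neumann data $-\frac{1}{|S|}\chi_S$ still has integral $-1$, which is the compatibility condition required by the unit source. Concretely, $\mathbf{N}_\eta$ is the \cite{C-K} Neumann matrix plus the solution $\mathbf{w}$ of a regular Neumann problem whose data is the difference of the two fluxes; this data has zero mean, so $\mathbf{w}$ exists and is unique up to constants. Set $N^{loc}=N_{11}+iN_{21}$. Then \eqref{eqn: local NK BVP} holds, and $N^{loc}\in H^1_{loc}(\overline\Omega_\eta\setminus\{z_\tau\})$ by the estimates of \cite{C-K}.

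To get the asymptotics \eqref{N asymptotics}, write $\Gamma(x)=C_n\big(A^{-1}(z_\tau)(x-z_\tau)\cdot(x-z_\tau)\big)^{\frac{2-n}{2}}$ and choose $C_n$ so that $\Gamma$ is the fundamental solution of the frozen operator $\textrm{div}(A(z_\tau)\nabla\cdot)$. Since $A(z_\tau)$ is complex symmetric, the power is understood as the analytic branch. This branch is well defined because \eqref{eqn: AR assump} keeps the real part of the quadratic form bounded away from zero, and the smallness of $k$ keeps its argument away from the negative real axis. Then put $R=N^{loc}-\Gamma$. It solves
\[
LR=-\textrm{div}\big((A(\cdot)-A(z_\tau))\nabla\Gamma\big)=:\textrm{div}\,F,\qquad |F(x)|\le CE|x-z_\tau|^{\beta+1-n},
\]
where the bound on $F$ uses $\|A\|_{C^\beta}\le E$. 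Represent $R$ locally through the Neumann function, $R(x)=\int N(x,y)\,\textrm{div}F(y)\,dy+h(x)$ with $h$ regular near $z_\tau$. Integrating by parts, the pointwise bound $|\nabla_y\mathbf{N}|\le C|x-y|^{1-n}$ together with the standard convolution lemma $\int|x-y|^{1-n}|y-z|^{\beta+1-n}dy\le C|x-z|^{2-n+\beta}$ (valid for $\beta<1$, with $\alpha<\beta$ absorbing the borderline logarithm) gives $|R|\le C|x-z_\tau|^{2-n+\alpha}$ for $|x-z_\tau|\le\rho$. The gradient bound $|x-z_\tau||\nabla R|\le C|x-z_\tau|^{2-n+\alpha}$ follows by rescaling: on each ball $B_{r/2}(x)$ with $r=|x-z_\tau|$, apply interior $C^{1,\alpha}$ Schauder (Campanato) estimates for the system with $C^\beta$ coefficients, which hold for strongly elliptic systems. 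Near $\partial\Omega_\eta$ use the corresponding boundary estimates, which are needed only where $\rho$ reaches that far.

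For the $H^1(\Omega)$ bound, note that $\textrm{dist}(z_\tau,\Omega)\ge c\tau$ for $\tau\le\tau_0$ depending on $r_0$, by the $C^1$ regularity of $\partial\Omega$. Split $\Omega$ into $\Omega\cap B_\rho(z_\tau)$ and its complement. On the complement $N^{loc}$ is bounded in $H^1$ by a constant $B$, using the \cite{C-K} bounds away from the pole. On $\Omega\cap B_\rho(z_\tau)$ use $|\nabla N^{loc}|\le C|x-z_\tau|^{1-n}$, which follows from the previous step, and integrate over $|x-z_\tau|\ge c\tau$:
\[
\int_{c\tau}^\rho r^{2-2n}r^{n-1}\,dr\le C\tau^{2-n}.
\]
The $L^2$ norm of $N^{loc}$ itself is bounded uniformly in $\tau$ since $n\ge3$. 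Taking square roots gives $\|N^{loc}\|_{H^1(\Omega)}\le C\tau^{\frac{2-n}{2}}+B$.

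The main obstacle is the regularity theory for the complex/system setting. The pointwise bounds $|\mathbf{N}|\le C|x-y|^{2-n}$ and $|\nabla\mathbf{N}|\le C|x-y|^{1-n}$, which \cite{C-K} provides for systems with H\"older coefficients, must be verified for the extended, non-symmetric-in-blocks $\mathbf{C}$ on $\Omega_\eta$, and the Schauder estimates needed for the gradient of $R$ must hold for that system. If needed, I would first obtain these through the Campanato approach, which applies to strongly elliptic systems with $C^\beta$ coefficients.
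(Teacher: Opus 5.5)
Your proof is correct and follows the paper's overall scheme: a Neumann kernel of $L$ on $\Omega_\eta$, a regular Neumann correction with zero-mean data that moves the flux onto $S$, subtraction of the frozen fundamental solution $\Gamma$, and the H\"older modulus of $A$ fed into the convolution bound $\int|x-y|^{1-n}|y-z_\tau|^{1-n+\beta}\,dy\le C|x-z_\tau|^{2-n+\beta}$. The individual estimates, however, are obtained differently.

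You write the remainder equation with the variable operator, $LR=-\textrm{div}\big((A-A(z_\tau))\nabla\Gamma\big)$, and represent $R$ against the Neumann kernel of $L$. The paper does the dual thing: it writes $\textrm{div}(A(z_\tau)\nabla\widetilde R)=\textrm{div}\big((A(z_\tau)-A)\nabla N\big)$ and represents $\widetilde R$ against the explicit kernel $\Gamma(\cdot,w)$. Both rest on the same gradient bound $|\nabla N|\le C|x-y|^{1-n}$, but the paper's choice keeps the representing kernel explicit, so its representation formula is just a constant-coefficient convolution.

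In return, you supply two steps the paper leaves implicit: the extension of $A$ to $\Omega_\eta\setminus\Omega$ with the ellipticity bounds preserved, and the gradient half of the remainder estimate, via rescaled Schauder estimates on $B_{|x-z_\tau|/2}(x)$.

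For the $H^1$ bound, integrating $|\nabla N^{loc}|\le C|x-z_\tau|^{1-n}$ over $\{|x-z_\tau|\ge c\tau\}$ gives exactly $\tau^{(2-n)/2}$ in every dimension. The paper's single Caccioppoli inequality at scale $\tau$ gives $\tau^{-1}\Vert N\Vert_{L^2}$, which reproduces that exponent only for $n\ge5$; for $n=3$ it gives only $\tau^{-1}$.

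Two small corrections:
\begin{itemize}
\item $\Vert N^{loc}\Vert_{L^2(\Omega)}$ is not uniformly bounded for $n\ge4$. Its square is of order $\log(1/\tau)$ for $n=4$ and $\tau^{4-n}$ for $n\ge5$. It is still $O(\tau^{(2-n)/2})$, so your conclusion is unaffected.
\item No smallness of $k$ is needed to define the power. For real $\xi\neq0$ and $\zeta=A^{-1}(z_\tau)\xi$ one has $\Re\big(A^{-1}(z_\tau)\xi\cdot\xi\big)=A_R(z_\tau)\zeta\cdot\overline{\zeta}>0$.
\end{itemize}
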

\begin{proof}
    The Neumann kernel $N$ for $L$ in $\Omega_{\eta}$ is the distributional solution to
    \begin{equation}\label{eqn: Global NK BVP}
        \begin{cases}
            L N(\cdot,z_\tau)=-\delta(\cdot-z_\tau),\quad&\textrm{in}\quad\Omega_\eta,\\
            A(x)\nabla N(\cdot,z_\tau)\cdot\nu=-\frac{1}{|\partial\Omega_\eta|},&\textrm{on}\quad\partial\Omega_\eta.
        \end{cases}
    \end{equation}
    Considering the fundamental solution to the constant coefficients operator $\widetilde{L}=\textrm{div}(A(z_\tau)\nabla\cdot)$ in $\mathbb{R}^n$, with pole at $z_\tau$, $\Gamma(x,z_\tau)= C_n\big(A^{-1}(z_\tau)(x-z_\tau)\cdot(x-z_\tau)\big)^{\frac{2-n}{2}}$, we set $\widetilde{R}(\cdot,z_\tau)=N(\cdot,z_\tau)-\Gamma(\cdot,z_\tau)$, which is the weak solution to
    \begin{equation}\label{eqn: R BVP}
        \begin{cases}
            \textrm{div}(A(z_{\tau})\nabla \widetilde{R}(x,z_{\tau}))=\textrm{div}\big(\big(A(z_{\tau})-A(x)\big)\nabla N(x,z_{\tau})\big),\quad &\textrm{for}\quad x\in\Omega_\eta,\\
            A(z_{\tau})\nabla \widetilde{R}(x,z_{\tau})\cdot\nu=\big(A(z_{\tau})-A(x)\big)\nabla N(x,z_{\tau})\cdot\nu-A(z_\tau)\nabla\Gamma(x,z_\tau)\cdot\nu-\frac{1}{|\partial\Omega_\eta|},&\textrm{for}\quad x\in\partial \Omega_\eta.
        \end{cases}
    \end{equation}
    By Green's identities, for every $w\!\in\!\Omega_\eta$,
    \begin{align}\label{eqn: V(w,z)}
        \widetilde{R}(w,z_{\tau})=& \int_{\Omega_\eta}\big(A(z_\tau)\!-\!A(x)\big)\nabla N(x,z_{\tau})\!\cdot\!\nabla\Gamma(x,w)\;dx -\frac{1}{|\partial\Omega_\eta|}\!\int_{\partial\Omega_\eta}\Gamma(x,w)\;dS(x)\nonumber\\
        &-\int_{\partial\Omega_\eta}\Gamma(x,w)A(z_{\tau})\nabla\Gamma(x,z_{\tau})\cdot\nu\;dS(x)-\int_{\partial\Omega_\eta}R(x,z_{\tau})A(\tau)\nabla\Gamma(x,w)\!\cdot\!\nu\;dS(x),
        \end{align}
    Recalling that $|N(x,z_{\tau})|\!\leq\! C|x-z_{\tau}|^{2-n}$, for every $x\!\in\!\Omega_\eta$, by the regularity of $A$ we have $|\nabla N(x,z_{\tau})|\!\leq\! C|x-z_{\tau}|^{1-n}$, for every $x\!\in\! B_\rho(z_\tau)$, with $\rho$ sufficiently small. By taking $w\!\in\! B_\rho(z_\tau)$, the boundary integrals are uniformly bounded. For the volume integral of \eqref{eqn: V(w,z)} we have $\left\vert\int_{\Omega_\eta}\big(A(z_\tau)\!-\!A(x)\big)\nabla N(x,z_{\tau})\!\cdot\!\nabla\Gamma(x,w)\;dx\right\vert\leq |w-z_{\tau}|^{2-n+\beta}$, hence \eqref{eqn: R property} is satisfied for $x$ such that $|x-z_{\tau}|\leq \rho$ and $\alpha$ so that $0<\alpha<\beta$. By defining a ball with radius $\frac{C\tau}{2}$ centered at $z_\tau$ such that $\Omega\subset\Omega_\eta\backslash B_{\frac{C\tau}{2}}(z_\tau)$, and $\textrm{dist}(\Omega,\mathbb{R}^n\backslash\{\Omega_\eta\backslash B_{\frac{C\tau}{2}}(z_\tau)\})=(1-C)\tau$, by Caccioppoli inequality (see \cite[Theorem 4.4]{Caccioppoli}) we also obtain $\Vert N(\cdot,z_\tau)\Vert_{H^1(\Omega)}\leq \frac{C}{\tau}\Vert N(x,z_\tau)\Vert_{L^2(\Omega_\eta\backslash B_{\frac{C\tau}{2}}(z_\tau))}\leq C\tau^{\frac{2-n}{2}}$. Setting now $N^{loc}(\cdot,z_\tau)=N(\cdot,z_\tau)+v(\cdot)$, where $v$ is the solution to 
    \begin{equation}\label{eqn: BVP for w}
        \begin{cases}
            \textrm{div}(A(\cdot)\nabla v(\cdot))=0,\quad&\textrm{in}\quad\Omega_\eta,\\
            A(\cdot)\nabla v(\cdot)\cdot\nu=\frac{1}{|\partial\Omega_\eta|},&\textrm{on}\quad\partial\Omega_\eta\backslash S,\\
            A(\cdot)\nabla v(\cdot)\cdot\nu=-\frac{|\partial\Omega_\eta\backslash S|}{|\partial\Omega_\eta||S|},&\textrm{on}\quad S,
        \end{cases}
    \end{equation}
    we have that $N^{loc}$ is a solution to \eqref{eqn: local NK BVP}. Moreover, $R(\cdot,z_\tau)=\widetilde{R}(\cdot,z_\tau)+v(\cdot)$ satisfies \eqref{eqn: R property}. Again, by Caccioppoli inequality, the $H^1$-norm of $N^{loc}$ can be estimated as that of $N$ above, concluding the proof.
\end{proof}
For the case $m>0$, we  need stronger regularity assumptions on the coefficients of $L$ in \eqref{eqn: 1}. 
\begin{theorem}[Singular solutions with zero Neumann condition on $\partial\Omega\backslash\overline{\Sigma}$]\label{sing thm}
Let $x^{0}$, $z_{\tau}$ be as in Theorem \ref{Neumann th} and assume that $A=A(x)$, $x\in\Omega$ is a complex-symmetric matrix-valued function satisfying \eqref{eqn: AR assump} and \eqref{eqn: AI assump} and such that $||A||_{W^{1,p}(\Omega)}\leq E$, for some positive constant $E$. For any $m=1,2,...$, there exists $u^{loc}_m\in H_{loc}^{1}(\overline{\Omega}_\eta\backslash\{z_\tau\})\cap W_{loc}^{2,p}(\Omega_\eta\backslash\{z_\tau\})$ such that
    \begin{equation}
        \begin{cases}
            Lu^{loc}_m=0,\quad &\textrm{in}\quad\Omega_\eta\backslash\{z_\tau\},\label{eqn: sing eqn NP}\\
            A\nabla u_m^{loc}\cdot\nu=0, &\textrm{on}\quad\partial\Omega_\eta,
        \end{cases}
    \end{equation} 
    with 
    \begin{align}\label{u explicit}
         u_m^{loc}(x)\!=&\big(A^{-1}(z_\tau)(x\!-\!z_\tau)\!\cdot\!(x\!-\!z_\tau)\big)^{\frac{2-n-m}{2}}\!\!m!\big(A^{-1}_{nn}(z_\tau)\big)^{\frac{m}{2}} C_m^{\frac{n-2}{2}}\!\Bigg(\!\frac{A^{-1}_{n}(z_\tau)(x\!-\!z_\tau)}{\big(A^{-1}_{nn}(z_\tau)\big)^{\frac{1}{2}}\!\big(A^{-1}(z_\tau)(x\!-\!z_\tau)\!\cdot\!(x\!-\!z_\tau)\big)^\frac{1}{2}}\!\Bigg)\!+\!w(x).
     \end{align}
    Moreover $w$ satisfies
    \begin{eqnarray}\label{eqn: w property 1}
        |w(x)|+|x-z_\tau||Dw(x)|\leq C|x-z_\tau|^{2-n-m+\alpha},\quad \textrm{for any}\quad x\in B_\frac{\eta}{4}(z_\tau)\backslash\{z_\tau\},\\
        \label{eqn: w property 2}\Big(\int_{r<|x-z_\tau|<2r}|D^2w|^p\;dx\Big)^{\frac{1}{p}}\leq Cr^{\frac{n}{p}-n-m+\alpha},\quad\textrm{for every}\quad r,\quad0<r<\eta/8.
    \end{eqnarray}
    $\alpha$ is any number such that $0<\alpha<\beta=1-\frac{n}{p}$, and $C$ is a positive constant depending on $\alpha,n,m,p,\mathcal{E}_1,\mathcal{E}_2,k,\eta_0,\eta$ and $E$.
\end{theorem}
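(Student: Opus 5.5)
The construction splits into a local singular solution plus a Neumann correction. First, since $\Omega_\eta$ has $C^1$ boundary and $A$ satisfies \eqref{eqn: AR assump}, \eqref{eqn: AI assump} with $\Vert A\Vert_{W^{1,p}(\Omega)}\le E$, I extend $A$ to $\Omega_\eta$ (indeed to $\mathbb{R}^n$). The extension is meant to keep the $W^{1,p}$ bound, the ellipticity \eqref{eqn: C elliptic} of the associated real $2n\times 2n$ system $\mathbf{C}$, and the sign condition on $A_I$. A reflection across $\partial\Omega$ in the local charts, followed by a cutoff interpolation with $A(x^0)$, should do this.

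By Sobolev embedding, $A\in C^{\beta}$ with $\beta=1-\frac np$. I then apply the result of \cite{Cu-G-N} in a ball $B_{\eta/2}(z_\tau)\subset\Omega_\eta$ (recall $\mathrm{dist}(z_\tau,\partial\Omega_\eta)\ge \eta/2$ up to shrinking $\tau_0$). This gives a solution $u_m$ of $Lu_m=0$ in $B_{\eta/2}(z_\tau)\setminus\{z_\tau\}$ of the form \eqref{u global} with $z=z_\tau$. Its remainder $v_0$ satisfies the bounds \eqref{eqn: w property 1}, \eqref{eqn: w property 2} near $z_\tau$.

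The proof of \cite{Cu-G-N} is real-valued, so this is where I expect the main difficulty. The leading term $H_m(x)$, the explicit Gegenbauer expression, is a homogeneous solution of the constant-coefficient complex operator $\mathrm{div}(A(z_\tau)\nabla\cdot)$. This holds because $A^{-1}(z_\tau)$ is complex symmetric and invertible: $A_R$ and $A_I$ commute and $A_R$ is positive definite, so $A(z_\tau)$ is simultaneously diagonalisable with eigenvalues having positive real part. Consequently the quadratic form $A^{-1}(z_\tau)y\cdot y$ stays in a sector and avoids the negative real axis, and the complex powers are well defined via the principal branch.

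I then seek the remainder through the iteration $v_0=\sum_j v^{(j)}$. At each step one solves $\mathrm{div}(A(z_\tau)\nabla v^{(j+1)})=\mathrm{div}((A(z_\tau)-A)\nabla v^{(j)})$ using the fundamental solution $\Gamma$, working with weighted norms $\sup |x-z_\tau|^{n+m-2-\alpha-j\beta'}|\cdot|$. Each step gains a factor $|x-z_\tau|^{\beta}$, by the same singular-integral estimate already used for the volume integral in \eqref{eqn: V(w,z)}. The second-derivative bound \eqref{eqn: w property 2} then follows from interior $W^{2,p}$ estimates for $L$ on dyadic annuli $r<|x-z_\tau|<2r$, rescaled to unit size. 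These estimates are valid for the strongly elliptic system $\mathbf{C}$ with $W^{1,p}$ coefficients, since the leading coefficients are continuous. The part that needs real care is checking that the constant-coefficient singular-integral bounds carry over to the complex (equivalently, $2\times 2$ system) setting. This is where strong ellipticity \eqref{eqn: C elliptic} enters, and is presumably also why the constant $C$ depends on $k$ and $\mathcal{E}_2$.

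Finally I impose the Neumann condition. Let $\chi\in C^\infty_0(B_{\eta/4}(z_\tau))$ with $\chi\equiv1$ on $B_{\eta/8}(z_\tau)$, and set $u_m^{loc}=\chi u_m+\widetilde w$. Here $\widetilde w\in H^1(\Omega_\eta)$ solves
\begin{equation*}
\mathrm{div}(A\nabla\widetilde w)=-\mathrm{div}(A\nabla(\chi u_m))\ \ \text{in }\Omega_\eta,\qquad A\nabla\widetilde w\cdot\nu=0\ \ \text{on }\partial\Omega_\eta.
\end{equation*}
The right-hand side is supported in the annulus $\eta/8\le|x-z_\tau|\le\eta/4$, where $u_m$ is bounded in $W^{2,p}$ by constants depending on the data. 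The Neumann problem is solvable: the right-hand side is a divergence of a compactly supported field, so its integral vanishes. Its solution is unique up to constants by the Lax–Milgram lemma applied to the coercive form of $\mathbf{C}$. The Neumann condition on $\partial\Omega_\eta$ holds because $\chi u_m$ vanishes near $\partial\Omega_\eta$. Interior $W^{2,p}$ regularity gives $\widetilde w\in W^{2,p}_{loc}$, and $\widetilde w$ is bounded, with bounded gradient, on $B_{\eta/8}(z_\tau)$. Setting $w=v_0+\widetilde w$ near $z_\tau$, the bounds \eqref{eqn: w property 1}, \eqref{eqn: w property 2} persist, since bounded terms are dominated by $|x-z_\tau|^{2-n-m+\alpha}$ for $m\ge1$. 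This gives \eqref{u explicit} and the regularity $u_m^{loc}\in H^1_{loc}(\overline\Omega_\eta\setminus\{z_\tau\})\cap W^{2,p}_{loc}(\Omega_\eta\setminus\{z_\tau\})$.
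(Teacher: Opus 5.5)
Your route differs from the paper's. The paper takes the \cite{Cu-G-N} solution $u$ on a ball $B_R(z_\tau)$ large enough that $Lu=0$ on $\Omega_\eta\setminus\{z_\tau\}$. It then adds $v_1$ with $Lv_1=0$ and Neumann data $-A\nabla u\cdot\nu$ on $\partial\Omega_\eta$. You only need the singular solution on $B_{\eta/2}(z_\tau)$: you cut it off and correct with a Neumann problem whose source lives in the annulus $\eta/8\le|x-z_\tau|\le\eta/4$. You also make explicit the extension of $A$ beyond $\Omega$, which the paper tacitly needs.

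\textbf{Gap: solvability of your Neumann problem.} The field $A\nabla(\chi u_m)$ is compactly supported, but it blows up like $|x-z_\tau|^{1-n-m}$ at $z_\tau$, so the divergence theorem cannot be used across the pole. Write $f$ for $\mathrm{div}(A\nabla(\chi u_m))$ computed in $\Omega_\eta\setminus\{z_\tau\}$ and apply the divergence theorem on the annulus:
\[
\int_{\Omega_\eta} f\,dx=-\int_{\partial B_{\eta/8}(z_\tau)}A\nabla u_m\cdot\nu\,dS=:-\Phi .
\]
So compatibility holds if and only if the flux $\Phi$ of $u_m$ around $z_\tau$ vanishes. This is also necessary for the statement itself, since $Lu_m^{loc}=0$ in $\Omega_\eta\setminus\{z_\tau\}$ with zero Neumann data on $\partial\Omega_\eta$ forces zero flux.

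The form \eqref{u global} does not force $\Phi=0$. Let $u_0$ be the $m=0$ singular solution, normalised to have flux $-1$. Then $u_m+c\,u_0$ has the same leading term and still satisfies the remainder bounds, but its flux is $\Phi-c$. The bounds survive because $m\ge1>\alpha$, so $|x-z_\tau|^{2-n}\le C|x-z_\tau|^{2-n-m+\alpha}$ on $B_{\eta/4}(z_\tau)$, and likewise for the gradient and for the $L^p$ norms of $D^2$ on annuli.

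This also gives the repair: replace $u_m$ by $u_m+\Phi\,u_0$ before cutting off, and absorb $\Phi u_0$ into $w$. The paper's $v_1$ problem needs the same condition $\int_{\partial\Omega_\eta}A\nabla u\cdot\nu\,dS=0$, and leaves it unchecked too.

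\textbf{The step you call the main difficulty is not needed.} The paper applies \cite{Cu-G-N} directly to complex-symmetric $A$, hence the complex Gegenbauer polynomial in \eqref{u global}. Your sketched replacement would fail for $m\ge2$ anyway. The source $(A(z_\tau)-A)\nabla H_m$ has size $|x-z_\tau|^{\beta+1-n-m}$, which is not integrable at the pole once $m\ge 1+\beta$, so the plain potential against $\nabla\Gamma$ diverges. The estimate you borrow from the proof of Theorem \ref{Neumann th} only covers $m=0$ and $m=1$. For higher $m$ you need a renormalised kernel (subtracting the Taylor polynomial of $\nabla_y\Gamma(x,\cdot)$ at $z_\tau$) or a weighted-space fixed point, as in the cited construction.
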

\begin{proof} 
    The construction of singular solutions $u$ to \eqref{eqn: 1} of type \eqref{u explicit}, having an isolated singularity at the centre of a ball $B_R(z_\tau)$ was obtained in \cite{Cu-G-N} and recalled in \eqref{u global}, where the reminder term $v_0$ satisfies \eqref{eqn: w property 1} for any $x\in B_R(z_\tau)\backslash\{z_\tau\}$, and \eqref{eqn: w property 2} for every $r$, $0<r<R/2$. Following a similar line of argument of that in the proof of Theorem \ref{Neumann th} above, we adjust $u$ in \eqref{u global} with a term $v_1$ so that the main structure of $u$ appearing in \eqref{u global} is preserved and the Neumann condition on the modified $u$ is zero outside $\Sigma$. To this end, we pick $v_1$ to be the solution to the Neumann problem 
    \begin{equation}\label{eqn: Neumann problem for v1}
        \begin{cases}
            Lv_1=0,\quad&\textrm{in}\quad\Omega_\eta,\\
            A\nabla v_1\cdot\nu=-A\nabla u\cdot\nu,&\textrm{on}\quad\partial\Omega_\eta,\\
            \int_{\Omega_\eta}v_1\;dx=0
        \end{cases}
    \end{equation}
    and set $u^{loc}_m = u+v_1$. Observing that $|\nabla u|\leq C$, on $\partial\Omega_\eta$, by the trace theorem and Poincar\'e's inequality, we have $\Vert v_1\Vert_{H^1(\Omega_\eta)}\leq C$. Moreover, by a standard interior regularity estimate \cite[Theorem 6.2.6]{Morreybook}, we also have $\Vert v_1\Vert_{W^{2,p}\big(B_{\frac{\eta}{4}}(z)\big)}\leq C$. Setting $w=v_0+v_1$, we have that $u_m^{loc}$ takes the form \eqref{u explicit} and $w$ satisfies \eqref{eqn: w property 1}, \eqref{eqn: w property 2}.
\end{proof}

We will also need the following Lemma, which proof can be found in \cite[Lemma 4.3]{mypreprint}.
\begin{lemma}\label{Grad est ND lemma}
    Let the hypothesis of Theorem \ref{sing thm} be satisfied. Then, for any $m=1,2,..$, the singular solution $u_{m}^{loc}$ having an isolated singularity at $z_\tau$ also satisfies $|Du_{m}^{loc}(x)|>C|x-z_\tau|^{1-(n+m)}$ for every $x\in\Omega_\eta$, $0<|x-z_\tau|\leq r_1$, where $C$ and $r_1$ are positive constants depending on \textit{a-priori} data.
\end{lemma}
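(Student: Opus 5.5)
The lower bound on $|Du_m^{loc}|$ comes from the explicit leading term in \eqref{u explicit}: it is homogeneous of degree $2-n-m$, its gradient is homogeneous of degree $1-n-m$ and never vanishes, and the remainder $w$ has gradient of strictly lower order by \eqref{eqn: w property 1}.

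Write $u_m^{loc}=H_m+w$, where
\[
H_m(x)=\big(A^{-1}(z_\tau)(x-z_\tau)\cdot(x-z_\tau)\big)^{\frac{2-n-m}{2}}m!\big(A^{-1}_{nn}(z_\tau)\big)^{\frac m2}C_m^{\frac{n-2}{2}}\big(\cdots\big).
\]
Fix $B=A(z_\tau)$, a constant complex-symmetric matrix. For $m=0$, $H_m$ is (up to constants) the fundamental solution of $\textrm{div}(B\nabla\cdot)$. In general, $H_m$ is a constant multiple of $\partial^m_{y_n}$ applied to that fundamental solution in the variables $y=x-z_\tau$. This is the Gegenbauer generating-function identity $(1-2rt+r^2)^{-(n-2)/2}=\sum_m C_m^{(n-2)/2}(t)r^m$, after the linear change of variables $y=B^{1/2}\tilde y$. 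So $H_m(y)=|y|^{2-n-m}\Phi(y/|y|)$ with $\Phi$ smooth on the sphere, and $DH_m$ is homogeneous of degree $1-n-m$.

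\textbf{Step 1: non-vanishing on the sphere.} I would show $\min_{|\theta|=1}|DH_m(\theta)|\ge c_0>0$, with $c_0$ depending only on $\mathcal{E}_1,\mathcal{E}_2,k,n,m$. Suppose $DH_m(\theta_0)=0$ for some unit $\theta_0$. Then Euler's identity $y\cdot DH_m=(2-n-m)H_m$ forces $H_m(\theta_0)=0$. In the real-symmetric case one reduces by $B^{1/2}$ to the Laplacian, where $H_m$ is a Kelvin-type transform of a zonal harmonic $r^m C_m(\cos\phi)$. A gradient of such a function vanishes only where both $C_m$ and $C_m'$ vanish, which is impossible since Gegenbauer polynomials have simple zeros. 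For complex $B$ the square root $B^{1/2}$ and the quadratic form $B^{-1}y\cdot y$ are complex, so I would argue by continuity/perturbation: under the frequency restriction \eqref{eqn: k1}, $B$ is close in argument to a real positive matrix, and $\Re(B^{-1}y\cdot y)\ge c|y|^2$. Then $DH_m$ on the sphere is a uniformly small perturbation of the real case and stays bounded below. Compactness of the admissible set of $B$ (bounded via \eqref{eqn: AR assump}, \eqref{eqn: AI assump}) makes $c_0$ uniform.

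\textbf{Step 2: conclusion.} By homogeneity, $|DH_m(x)|\ge c_0|x-z_\tau|^{1-n-m}$. By \eqref{eqn: w property 1}, $|Dw(x)|\le C|x-z_\tau|^{1-n-m+\alpha}$ for $x\in B_{\eta/4}(z_\tau)$. Choosing $r_1\le\eta/4$ with $Cr_1^\alpha\le c_0/2$ gives
\[
|Du_m^{loc}(x)|\ge \tfrac{c_0}{2}|x-z_\tau|^{1-n-m}
\]
for $0<|x-z_\tau|\le r_1$.

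The main obstacle is Step 1 in the genuinely complex case: ruling out zeros of $DH_m$ uniformly when the quadratic form is complex. This is presumably where the frequency bound on $k$ is used, and where I would rely on the corresponding argument in \cite{mypreprint}.
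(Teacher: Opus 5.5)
Your Step 2 (homogeneity of the leading term, the remainder bound \eqref{eqn: w property 1}, then shrinking $r_1$) is the right mechanism. The paper itself gives no argument and simply cites \cite[Lemma 4.3]{mypreprint}. The gap is Step 1 in the complex case, and the perturbation argument you sketch does not close. First, the lemma is stated under the hypotheses of Theorem \ref{sing thm}, which put no restriction on $k$; the bound \eqref{eqn: k1} is only used in Section \ref{sec4}, to control the sign of $F$. Second, even granting \eqref{eqn: k1}, it bounds $k$ by an explicit quantity independent of $m$. Perturbing off the real case would instead need $k\mathcal{E}_2$ small compared with the $m$-dependent real-case constant $c_0(m)$, divided by an $m$-dependent modulus of continuity of $B\mapsto DH_m(\cdot;B)$. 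So, as written, the uniform non-vanishing of $DH_m$ on the sphere is left unproved.

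The missing observation is that your own Euler/simple-zero argument works directly for complex $B$, with no reduction to the Laplacian. Write $H_m=c\,q^{\frac{2-n-m}{2}}C_m^{\frac{n-2}{2}}(s)$, where $q(y)=B^{-1}y\cdot y$, $\ell(y)=B^{-1}e_n\cdot y$, $\gamma=(B^{-1}_{nn})^{1/2}$ and $s=\ell/(\gamma q^{1/2})$.

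\textbf{Branches.} For real $y\neq0$ one has $\Re q(y)>0$. Indeed, $B^{-1}$ is complex symmetric, so its Hermitian part is $\Re(B^{-1})$, and $\Re\langle B^{-1}\xi,\xi\rangle=\langle B_R\zeta,\zeta\rangle$ with $\zeta=B^{-1}\xi$. Hence all principal branches are well defined and depend continuously on $(y,B)$.

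\textbf{Non-vanishing.} Suppose $DH_m(\theta)=0$. Euler's identity gives $H_m(\theta)=0$, so $C_m^{\frac{n-2}{2}}(s(\theta))=0$. The zeros of Gegenbauer polynomials are real and simple, so $(C_m^{\frac{n-2}{2}})'(s(\theta))\neq0$, and therefore $Ds(\theta)=0$. A direct computation gives $Ds=(\gamma q^{3/2})^{-1}\big(q\,B^{-1}e_n-\ell\,B^{-1}y\big)$. Thus $Ds(\theta)=0$ means $q(\theta)e_n=\ell(\theta)\theta$. Since $\theta$ is real and $q(\theta)\neq0$, this forces $\theta=\pm e_n$. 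There $s=\pm1$ and $C_m^{\frac{n-2}{2}}(\pm1)=(\pm1)^m\binom{m+n-3}{m}\neq0$, a contradiction.

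\textbf{Uniformity.} This holds for every $k>0$. Continuity of $(\theta,B)\mapsto DH_m(\theta;B)$ on the compact set of admissible $B$ then yields $c_0=c_0(n,m,k,\mathcal{E}_1,\mathcal{E}_2)>0$. After that your Step 2 applies unchanged.

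As a side remark, the frequency restriction plays no role in this lemma, contrary to your guess.
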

\section{Proof of Main Results}\label{sec4}

We adapt the arguments in \cite{mypreprint} to prove Theorem \ref{stab results}. We fix an integer $h\geq 0$ and set $x^0\in\overline{\Sigma}_\eta$ such that $(-1)^h\frac{\partial^h}{\partial\widetilde{\nu}^h}(a_1-a_2)(x^0)=\Vert \frac{\partial^h}{\partial\widetilde{\nu}^h}(a_1-a_2)\Vert_{L^\infty(\overline{\Sigma}_\eta)}$ and $z_\tau=x^0+\tau\nu\in\Omega_\eta\backslash\overline{\Omega}$, for any $\tau$, $0\leq\tau\leq\min\{\tau_0,\: \frac{\eta}{8}\}$. To prove \eqref{eqn: Stab result}, we set $h=0$ and define $N_{a_i}^{loc}\in W^{2,p}(\Omega)$ to be the modified Neumann functions introduced in \eqref{eqn: local NK BVP} and corresponding to $A(\cdot,a_i(\cdot))$, 
\begin{equation}
    \label{eqn: Solns}
    N_{a_i}^{loc}(x)=C_n\big(A^{-1}(z_\tau,a_i(z_\tau))(x-z_\tau)\cdot(x-z_\tau)\big)^{\frac{2-n}{2}}+O(|x-z_\tau|^{2-n+\alpha}),
\end{equation} 
for $i=1,2$. By fixing $\rho>0$ and possibly reducing $\tau$, such that $0<\tau\leq\min\{\tau_0,\: \frac{\eta}{8},\:\frac{\rho}{2}\}$, we have that $B_\rho(z_\tau)\cap\Omega\neq\emptyset$ and $B_\rho(z_\tau)\cap\Omega\subset U_{\eta}$. Feeding Alessandrini's identity \eqref{eqn: Aless for ND} with $ N_{a_i}^{loc}$ in \eqref{eqn: Solns}, we obtain 
\begin{align}
    \label{eqn: I1}
\bigg|\int_{B_\rho(z_\tau)\cap\Omega}&\!\frac{F(x)}{\big|A^{-1}(z_\tau, a_1(z_\tau))(x-z_\tau)\cdot(x-z_\tau)\big|^n\big|A^{-1}(z_\tau, a_2(z_\tau))(x-z_\tau)\cdot(x-z_\tau)\big|^n}\;dx\bigg|\nonumber\\
    \leq&C\biggl\{\!\int_{\Omega\backslash B_\rho(z_\tau)}\!\!\!\!\!\!\!\!\!\!\!\!\!\!\!\!|x-z_\tau|^{2-2n}\;dx+\!\int_{B_\rho(z_\tau)\cap\Omega}\!\!\!\!\!\!\!\!\!\!\!\!\!\!\!\!|x-z_\tau|^{2-2n+\beta}\;dx+\!\int_{B_\rho(z_\tau)\cap\Omega}\!\!\!\!\!\!\!\!\!\!\!\!\!\!\!\!|x-z_\tau|^{2-2n+\alpha}\;dx+\!\int_{B_\rho(z_\tau)\cap\Omega}\!\!\!\!\!\!\!\!\!\!\!\!\!\!\!\!|x-x^0|^\beta|x-z_\tau|^{2-2n}\;dx\!\biggr\}\nonumber\\
    &+\Vert\mathcal{N}_{a_1}^\Sigma-\mathcal{N}_{a_2}^\Sigma\Vert_{\ast}\Vert A(x,a_1(x))\nabla N_{a_1}^{loc}(x)\cdot\nu\Vert_{\prescript{}{0}{H}^{-1/2}(\Sigma)}\Vert A(x,a_2(x))\nabla N_{a_2}^{loc}(x)\cdot\nu\Vert_{\prescript{}{0}{H}^{-1/2}(\Sigma)},
\end{align}
where $0<\alpha<\beta=1-\frac{n}{p}$ and $F$ is defined by $F(x)=\big(A^{-1}(x^0,a_2(x^0))-A^{-1}(x^0,a_1(x^0))\big)(x-z_\tau)\!\cdot\!(x-z_\tau)
    \times\big(\overline{A^{-1}(z_\tau, a_1(z_\tau))}(x-z_\tau)\!\cdot\!(x-z_\tau)\big)^{\frac{n}{2}}\!\big(\overline{A^{-1}(z_\tau, a_2(z_\tau))}(x-z_\tau)\!\cdot\!(x-z_\tau)\big)^{\frac{n}{2}}$.
The choice of $k$ in \eqref{eqn: k1} implies $|\Im F(x)|\leq|\Re F(x)|$ and $\Re F(x)>0$. Therefore, by \eqref{eqn: mono}, we compute $|F(x)|\geq C\big(a_1(x^0)-a_2(x^0)\big)|x-z_\tau|^{2+2n}$.
Now estimating all the integrals and the $\prescript{}{0}{H}^{-1/2}(\Sigma)$-norms in \eqref{eqn: I1}, we obtain $
    \Vert a_1-a_2\Vert_{L^\infty(\overline{\Sigma}_\eta)}\leq C\big\{C\tau^\beta+C\tau^\alpha+C\tau^{n-2}+ \Vert\mathcal{N}_{a_1}^\Sigma-\mathcal{N}_{a_2}^\Sigma\Vert_{\ast}\big\}$,
and, by taking $\tau\rightarrow0$, we obtain
\begin{eqnarray}\label{eqn: induction j=0 final}
    \Vert a_1-a_2\Vert_{L^\infty(\overline{\Sigma}_\eta)}\leq C\Vert\mathcal{N}_{a_1}^\Sigma-\mathcal{N}_{a_2}^\Sigma\Vert_{\ast},
\end{eqnarray}
which, by the regularity of $A$, leads to \eqref{eqn: Stab result}. To prove \eqref{eqn: der stab result}, by induction on $j$, we show that for $j\leq h$, 
\begin{equation}
    \label{eqn: induction aim}
    \Big\Vert \frac{\partial^j}{\partial\widetilde{\nu}^j}(a_1-a_2)\Big\Vert_{L^\infty(\overline{\Sigma}_\eta)}\leq C\Vert\mathcal{N}_{a_1}^\Sigma-\mathcal{N}_{a_2}^\Sigma\Vert_\ast^{\delta_j}.
\end{equation} 
For $j=0$, \eqref{eqn: induction aim} is given by \eqref{eqn: induction j=0 final}. For the inductive step, assuming that \eqref{eqn: induction aim} holds true for any $j$, $j\leq h$, one can show that it holds true for $j=h$ too by feeding Alessandrini's identity \eqref{eqn: Aless for ND}, for a fixed $m>0$, with the singular solution $u_{m;a_i}^{loc}\in W^{2,p}(\Omega)$ constructed in Theorem \ref{sing thm}, having a singularity at $z=z_\tau$ and corresponding to $A(\cdot,a_i(\cdot))$, for $i=1,2$.
Following the same line of reasoning of \cite[Proof of Theorem 7]{mypreprint} and by Lemma \ref{Grad est ND lemma} we have
\begin{flalign}\label{Alessandrini derivatives}
    \Vert\mathcal{N}_{a_1}^\Sigma-\mathcal{N}_{a_2}^\Sigma\Vert_{\ast}\Vert A(x,a_1(x))\nabla u_{m; a_1}^{loc}(x)\cdot\nu&\Vert_{\prescript{}{0}{H}^{-1/2}(\Sigma)}\Vert A(x,a_2(x))\nabla u_{m: a_2}^{loc}(x)\cdot\nu\Vert_{\prescript{}{0}{H}^{-1/2}(\Sigma)}\nonumber\\
    \geq& C\int_{B_\rho(z_\tau)\cap\Omega}\big(a_1(x)-a_2(x)\big)|x-z_\tau|^{2-2n-2m}\,dx-C,
\end{flalign}
leading to $\Vert\frac{\partial^h}{\partial\widetilde{\nu}^h}(a_1-a_2)\Vert_{L^\infty(\overline{\Sigma}_\eta)}\leq C\big\{\Vert\mathcal{N}_{a_1}^\Sigma-\mathcal{N}_{a_2}^\Sigma\Vert_\ast^{\delta_{h-1}}\tau^{-h}+\tau^\alpha+\tau^{n+2m-2-h}\big\}$ via estimation of the integrals and the $\prescript{}{0}{H}^{-1/2}$-norms in \eqref{Alessandrini derivatives}.
By optimising the latter inequality with respect to $\tau$ and choosing $m$ sufficiently large and, we obtain $\Vert D^h(a_1-a_2)\Vert_{L^\infty(\overline{\Sigma}_\eta)}\leq C\big\{\Vert\mathcal{N}_{a_1}^\Sigma-\mathcal{N}_{a_2}^\Sigma\Vert_\ast^{\delta_{h}}$,
hence, arguing as above, from the latter we obtain \eqref{eqn: induction aim}. By iteratively using the interpolation inequality in \cite[(127)]{mypreprint}, we obtain \eqref{eqn: der stab result}, concluding the proof.\qed
\bibliographystyle{plain}
\bibliography{references}
\end{document}